\documentclass{article}
\usepackage{graphicx}
\usepackage{amsmath}
\usepackage{amsfonts}
\usepackage{amssymb}
\usepackage{graphicx}
\usepackage{tikz}
\usepackage{tikz-cd}

\usepackage{amsthm}
\usepackage{mathrsfs}

\newtheorem{theorem}{Theorem}
\newtheorem{lemma}{Lemma}

\newenvironment{AMS}{}{}
\newenvironment{keywords}{}{}

\newtheorem{defn}[equation]{Definition}
\newtheorem{prop}[equation]{Proposition}
\newtheorem{rem}[equation]{Remark}

 \title{ Multiplication Operator Semigroups on Banach lattice valued continuous function spaces }
\author{Tobi David Olabiyi\footnote{Stellenbosch University, South Africa.\\ 25175645@sun.ac.za/davidtobiolabiyi@gmail.com.\\
	The author was supported by the German Academic Exchange Service (DAAD).}
} 
\date{\today}

\newcommand{\Bm}[1]{ \boldsymbol{ #1 }}

\usepackage{xcolor}
\usepackage[linktoc=all, colorlinks=true]{hyperref} 
\hypersetup{
	linkcolor=red,
	citecolor=blue,
	urlcolor=teal,
}

\newcommand{\C}{\mathbb{C}}
\newcommand{\R}{\mathbb{R}}
\newcommand{\N}{\mathbb{N}}

\newcommand{\mI}{\mathcal{I}}

\newcommand{\sL}{\mathscr{L}}
\newcommand{\mZ}{\mathcal{Z}}

\newcommand{\mO}{\mathcal{O}}

\newcommand{\mM}{\mathcal{M}}

\newcommand{\mT}{\mathcal{T}}

\newcommand{\cc}{\circ}

\newcommand{\ov}[1]{ \overline{#1}}

\newcommand{\q}{\quad}
\newcommand{\ep}{\varepsilon}

\newcommand{\ty}{\infty} 
 
\newcommand{\cd}{\cdot}
\newcommand{\ti}{\times}

\newcommand{\ra}{\rightarrow}

\newcommand{\da}{\downarrow}
\newcommand{\lo}{\longrightarrow}

\newcommand{\sse}{\subseteq}

\newcommand{\bk}{\backslash}

\newcommand{\la}{\lambda}
\newcommand{\Ga}{\Gamma}
\newcommand{\ga}{\gamma}

\newcommand{\sig}{\sigma}

\newcommand{\de}{\delta}

\newcommand{\Om}{\Omega}

\newcommand{\mt}{\mapsto}
\newcommand{\mf}\mapsfrom {}

\newcommand{\lb}{\left\lbrace}
\newcommand{\rb}{\right\rbrace }
\newcommand{\bcc}{\begin{cases}}
	\newcommand{\ecc}{\end{cases} }
\newcommand{\bet}{\begin{tikzcd}}
	\newcommand{\eet}{\end{tikzcd} }
\newcommand{\bei}{\begin{itemize}}
	\newcommand{\eei}{\end{itemize}}
\newcommand{\ben}{\begin{enumerate}}
	\newcommand{\een}{\end{enumerate}}
\newcommand{\beq}{\begin{equation}}
	\newcommand{\eeq}{\end{equation}}
\newcommand{\bec}{\begin{center}}
	\newcommand{\eec}{\end{center}}
\newcommand{\beqn}{\begin{equation*}}
	\newcommand{\eeqn}{\end{equation*}}
\newcommand{\beqa}{\begin{eqnarray}}
	\newcommand{\eeqa}{\end{eqnarray}}

\newcommand{\ban}{\begin{align*}}
	\newcommand{\ean}{\end{align*}}

\newcommand{\beqan}{\[\begin{aligned}}
	\newcommand{\eeqan}{\end{aligned}\]}

\begin{document}
	\maketitle
	\begin{abstract}
		We introduce and characterize, on the Banach lattice valued continuous function space, multiplication operators generating strongly continuous multiplication operator semigroups. Our characterization is the generalization of  known results for the scalar-valued continuous  functions $C_0(\Om)$ vanishing at infinity, on a locally compact (Hausdorff) space $\Om$, to Banach lattice $C_0(\Om, E)$ of continuous  Banach lattice $E$-valued functions vanishing at infinity.
	\end{abstract}
	
	\begin{keywords}
	  \textit{Key words:}  multiplication operators, multiplication semigroups, centre of Banach lattices, Banach lattice of continuous functions, $C_0$-semigroups, uniformly continuous operator semigroups.
	\end{keywords}
	
	\begin{AMS}
	  \textit{Mathematics Subject Classification (2020):}   47D06; 46E05, 46B42, 47B60, 46H25.
	\end{AMS}

	\begingroup
	\hypersetup{linkcolor=black}
	\tableofcontents
	\endgroup	
\section{Introduction}\label{sec:intro}

The study of multiplication operators and their operator semigroups have attracted a lot of attention, see for example \cite{Holderrieth91,van1993abstract,Heymann2013, hudzik2014multiplication,MerveSerkanEmrah2020}, majorly because multiplication operators arise naturally. Indeed, (i) multiplication operators can be considered, in a classical sense, as the infinite-dimensional generalization of diagonal matrices; and (ii)
there are several instances where certain kind of operators, e.g., normal operators on a separable infinite-dimensional Hilbert space  or differential operators on a scalar-valued function space via Fourier transforms, can be represented as multiplication operators.

These multiplication operators are also very much connected with certain kinds of multipliers, e.g., some operator multipliers, see \cite{Graser97}, on Banach space valued function spaces and the  Hadamard multipliers, see \cite{Anna2018}, on scalar-valued analytic function spaces, e.t.c.  In particular, in \cite{Graser97, ChristianRetha22}, the authors combine the study of extrapolation spaces with operator-valued multiplication operators. More so, these multiplication operators have  a relation with the  multiplier algebras, see \cite{TodorovTurowska2010}, in the setting of Banach algebras.

In \cite[Chapter I, section 3, p.19-29]{engel2006short}  and \cite[Chapter II, section 2, p.50]{engel2006short}, certain multiplication operators and their operator semigroups on concrete spaces $C_0(\Om)$ and $L^p(\Om, \mu),\ 1\le p<\ty$, where either $\Om$ is a locally compact (Hausdorff) space or $(\Om, \mu)$ is a (\textit{positive}) $\sig$-finite measure space respectively, are considered. We are in particular interested in the generalization of the results for the scalar-valued continuous  functions $C_0(\Om)$ vanishing at infinity to Banach lattice $C_0(\Om, E)$ of continuous  Banach lattice $E$-valued functions vanishing at infinity. 	
Generally, for a Banach lattice $E$, the lattice algebra
 \[\Bm{\mZ}(E):= \lb T\in \sL(E) : \exists\   \la> 0 \ \text{such that} \ \textbf{$|Tz|$} \le \la \textbf{$|z|$}\ \forall \ z\in E\rb\] 
of central operators, the so-called centre of $E$ equipped with the operator norm (equivalently, $||T||_{\sL(E)}= \inf\lb \la> 0 :\textbf{$|Tz|$} \le \la \textbf{$|z|$}\ \forall \ z\in E\rb $), is a commutative 1-Banach lattice algebra. We refer to \cite{wickstead17} for concepts of Banach lattice algebras, including examples. Moreover, $\Bm{\mZ}(E)$ is an AM-space with unit. Elements of this lattice algebra are called multiplication operators, for obvious reasons. Indeed, for concretes spaces $C_0(\Om)$ and $L^p(\Om, \mu),\ 1\le p\le\ty$, their centres can be identified uniquely with multiplications by functions in $C_b(\Om)$ and $L^\ty(\Om, \mu)$ respectively. And probably what is important to note here is that for a Banach lattice $E$, its centre $\Bm{\mZ}(E)$, with the uniform operator topology, is isometrically isomorphic to $C(Q)$ as commutative Banach lattice algebras for some compact (Hausdorff) space $Q$ (see also \cite[Section C-I.9, p.246]{arendt1986}). 
We let $\Bm{\mZ}(E)_\textbf{s}$ denotes $\Bm{\mZ}(E)$ equipped with the strong operator topology.

 \bigskip          

The major results of this paper are summarized in the following theorem.

\begin{theorem}\label{thm1}
	Let  $m_t\in C_b(\Om, \Bm{\mZ}(E)_\textbf{s})$ for each $t\ge0$ such that the induced multiplication semigroup $\mT(t)_{t\ge0}$ given by
\[ \mT(t) : C_0(\Om, E) \lo C_0(\Om, E);\ s(\cd)\mt \mT(t)s(\cd):=m_t(\cd)s(\cd)     \]
is a $C_0$-semigroup on $C_0(\Om, E)$. And denote its (infinitesimal) generator  on $C_0(\Om, E)$ by $(\mM, D(\mM))$. Furthermore, suppose that, for each $x\in\Om$, the induced pointwise multiplication semigroup $T_x(t)_{t\ge0}$ given by
\[T_x(t): E \lo E;\ z\mt T_x(t)z:=m_t(x)z    \]
is uniformly continuous on $E$.
Then we have the following.

\bei
\item[(i)] There exists a (generally unbounded) continuous function $\phi : \Om \lo  \Bm{\mZ}(E)_\textbf{s}$, where $(T_x(t))_{t\ge 0}= (e^{t\phi(x)}  )_{t\ge 0}$ for each $x\in\Om$ with
\[\sup_{x\in\Om}||e^{t_0\phi(x)}||_{\sL(E)}<\ty \ \text{ for some }\ t_0\in (0,1]  \]
such that $m_t(\cd)= e^{t\phi(\cd)}$ for all $t\ge0$. That is $\mT(t)_{t\ge0} =(\mT_\phi(t))_{t\ge 0}$, where $\mT_\phi(t)s(\cd):= e^{t\phi(\cd)}s(\cd)$ for all $s\in C_0(\Om, E)$ and $t\ge0$.

\item[(ii)]  $(\mM, D(\mM)) = (\mM_\phi,  D(\mM_\phi)  ) $, where $(\mM_\phi,  D(\mM_\phi)  )$ denotes the (generally unbounded)  multiplication operator on $C_0(\Om, E)$ induced by the continuous function $\phi$ and defined on its "maximal domain" $D(\mM_\phi)$.

\item[(iii)]   The multiplication $C_0$-semigroup $(\mT_\phi(t))_{t\ge 0}$ is uniformly continuous if and only if $\phi\in C_b(\Om, \Bm{\mZ}(E)_\textbf{s})$.

\eei	
	
\end{theorem}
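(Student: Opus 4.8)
The plan is to construct $\phi$ fibrewise, establish its regularity, and then read off the generator and the uniform-continuity criterion. For part~(i), fix $x\in\Om$. Since $(T_x(t))_{t\ge 0}$ is uniformly continuous it has a bounded generator $\phi(x)\in\sL(E)$ with $T_x(t)=e^{t\phi(x)}$ and $\phi(x)=\lim_{t\to 0^+}t^{-1}(m_t(x)-I)$ in operator norm. As each $m_t(x)$ lies in $\Bm{\mZ}(E)$ and $\Bm{\mZ}(E)$ is norm-closed in $\sL(E)$, the limit $\phi(x)$ again lies in $\Bm{\mZ}(E)$, so $m_t(\cd)=e^{t\phi(\cd)}$. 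The uniform bound follows from the norm identity
\[ \|\mT(t)\|_{\sL(C_0(\Om,E))}=\sup_{x\in\Om}\|m_t(x)\|_{\sL(E)}, \]
where ``$\le$'' is pointwise and ``$\ge$'' comes from testing on sections concentrated near a chosen point: as $\mT$ is a $C_0$-semigroup it is bounded on $[0,1]$, so any $t_0\in(0,1]$ gives $\sup_{x}\|e^{t_0\phi(x)}\|_{\sL(E)}=\|\mT(t_0)\|<\ty$.

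The main obstacle is the continuity of $\phi$: we are only given that each $m_t$ is SOT-continuous in $x$, and differentiating at $t=0$ need not preserve continuity. I would route around this via the Laplace transform. Writing $\|m_t(x)\|\le Me^{\om t}$ from the bound above, for $\la>\om$ and $z\in E$ set
\[ R(\la,\phi(x))z=\int_0^\ty e^{-\la t}m_t(x)z\,dt. \]
For each fixed $t$ the integrand $x\mt e^{-\la t}m_t(x)z$ is continuous and is dominated by $Me^{-(\la-\om)t}\|z\|$, so dominated convergence makes $x\mt R(\la,\phi(x))$ SOT-continuous. Because $\Bm{\mZ}(E)$ is isometrically $C(Q)$, resolvent inversion and logarithm act pointwise on $Q$ through a continuous functional calculus, and $\phi(x)$ is recovered from $R(\la,\phi(x))$ by such a map; this yields SOT-continuity of $\phi$ on all of $\Om$ at once. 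A local-logarithm argument near a fixed $x_0$, using that $m_t(x_0)\to I$ for small $t$, fails to globalise precisely because the admissible $t$ is not uniform in $x$, which is why the resolvent route is preferable.

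For part~(ii), if $s\in D(\mM)$ then $\mM s=\lim_{t\to 0^+}t^{-1}(m_t(\cd)s(\cd)-s(\cd))$ uniformly on $\Om$, and evaluating pointwise gives $(\mM s)(x)=\phi(x)s(x)$; hence $\mM\subseteq\mM_\phi$. To get equality I would compare resolvents: the identity $(R(\la,\mM)s)(x)=R(\la,\phi(x))s(x)$ exhibits $R(\la,\mM)$ as exactly the resolvent of the maximal multiplication operator $\mM_\phi$, and since a bounded operator determines the closed operator it inverts, $\mM=\mM_\phi$. Here one uses that $\mM_\phi$ on its maximal domain is closed and that, for $\la>\om$, $\la-\mM_\phi$ is pointwise invertible with the stated resolvent.

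Finally, part~(iii) is now formal. A $C_0$-semigroup is uniformly continuous if and only if its generator is bounded; by~(ii) the generator is $\mM_\phi$, and the same norm identity gives $\|\mM_\phi\|_{\sL(C_0(\Om,E))}=\sup_{x\in\Om}\|\phi(x)\|_{\sL(E)}$. Thus $(\mT_\phi(t))_{t\ge 0}$ is uniformly continuous iff $\sup_x\|\phi(x)\|_{\sL(E)}<\ty$, which together with the continuity established in~(i) is exactly the statement $\phi\in C_b(\Om,\Bm{\mZ}(E)_\textbf{s})$; conversely, if $\phi\in C_b(\Om,\Bm{\mZ}(E)_\textbf{s})$ then $\mM_\phi$ is bounded and $\mT_\phi(t)=e^{t\mM_\phi}$ is uniformly continuous.
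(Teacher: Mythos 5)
Your fibrewise construction of $\phi(x)$, the norm-closedness of $\Bm{\mZ}(E)$ in $\sL(E)$, and the identity $\|\mT(t)\|=\sup_{x\in\Om}\|m_t(x)\|_{\sL(E)}$ all match the paper's Lemma~\ref{lem:msg2}, and your resolvent identification $(R(\la,\mM)s)(x)=R(\la,\phi(x))s(x)$ for parts (ii)--(iii) is a legitimate (indeed cleaner) alternative to the paper's pointwise-differentiation argument. The genuine gap is exactly the step you isolated as the ``main obstacle'': continuity of $\phi$. Your Laplace-transform argument does give SOT-continuity of $x\mt R(\la,\phi(x))$ (modulo replacing dominated convergence by the evaluation identity $R(\la,\phi(\cd))z=(R(\la,\mM)(f\otimes z))(\cd)$ near a point, since $\Om$ need not be metrizable), but the passage back from the resolvent to $\phi$ fails: inversion is SOT-continuous along a net only when the \emph{inverses} are uniformly norm-bounded, and here the inverses are $\la-\phi(x_\ga)$, whose local norm-boundedness is precisely what is not available. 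The appeal to $\Bm{\mZ}(E)\cong C(Q)$ does not repair this: that isomorphism is isometric for the operator norm, and the strong operator topology is not transported to any pointwise-type topology on $C(Q)$ for which inversion or the logarithm acts continuously. So ``this yields SOT-continuity of $\phi$'' is unsupported.

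Moreover the step is not patchable from the stated hypotheses, because the continuity assertion itself requires more. Take $\Om=\{0\}\cup\{1/n:n\in\N\}$, $E=c_0$ (so $\Bm{\mZ}(c_0)\cong\ell^\infty$ acting diagonally), $m_t(0):=I_E$ and $m_t(1/n):=\mathrm{diag}(1,\dots,1,e^{-tn},1,\dots)$ with $e^{-tn}$ in slot $n$. Every hypothesis of the theorem holds: each $m_t$ is a contraction-valued SOT-continuous function, since $\|m_t(1/n)z-z\|=(1-e^{-tn})|z_n|\le|z_n|\to0$; $(\mT(t))_{t\ge0}$ is a contraction $C_0$-semigroup, since $\|\mT(t)s-s\|=\sup_n(1-e^{-tn})|s(1/n)_n|$ and $|s(1/n)_n|\to0$ by continuity of $s$ at $0$ together with $s(0)\in c_0$; and every pointwise semigroup has bounded generator $\phi(1/n)=\mathrm{diag}(0,\dots,0,-n,0,\dots)$, $\phi(0)=0$. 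Yet $\|\phi(1/n)z\|=n|z_n|\not\to0$ for $z=(k^{-1/2})_k$, so $\phi$ is not SOT-continuous, while $x\mt R(\la,\phi(x))$ \emph{is} SOT-continuous --- so the inversion step is exactly where any proof must break. Note that the paper's own proof of Lemma~\ref{lem:msg2} glosses the same point: it chooses a single $t_1$ making the difference quotients $t^{-1}(T_{x_n}(t)-I_E)z_0$ close to $\phi(x_n)z_0$ for \emph{all} $n$ simultaneously, tacitly assuming the uniform continuity of the pointwise semigroups is locally uniform in $x$; the example above defeats that argument too. Under such an added hypothesis (equivalently, local norm-boundedness of $x\mt\phi(x)$, which yields a uniform bound on $\la-\phi(x_\ga)$ locally) your inversion step closes and the rest of your proposal --- including the resolvent proof of (ii) via closedness of $\mM_\phi$ from Proposition~\ref{prop:mop1}, and the formal derivation of (iii) --- goes through.
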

	\bigskip

	The following is the organization of this paper. We start in section \ref{sec:mop}, concerning the statement (ii) of Theorem \ref{thm1}, by showing that the multiplication operator $(\mM_\phi,  D(\mM_\phi)  )$ induced by a continuous function $\phi : \Om \lo  \Bm{\mZ}(E)_\textbf{s}$ is generally an unbounded but closed and densely defined operator; which is bounded if and only if $\phi$ is bounded. Moreover, we characterize the spectrum and resolvent set of $\mM_\phi$ (see Proposition \ref{prop:mop1}). In section \ref{sec:msg}, we justify the condition appearing in the statement (i)  of Theorem \ref{thm1}, see Remark \ref{rem:msg1}, as what is necessary and sufficient for every  multiplication operator $(\mM_\phi,  D(\mM_\phi)  )$ generating a strongly continuous multiplication semigroup  $(\mT_\phi(t))_{t\ge 0}$ on $C_0(\Om, E)$; and that the semigroup is uniformly continuous if and only if $\phi$ and hence  $\mM_\phi$ is bounded (see Propositions \ref{prop:msg1} and Lemma \ref{lem:msg2}). We concluded this paper in section \ref{sec:exm} with some observations and concrete examples of such multiplication semigroups.

\section{Multiplication Operators on $C_0(\Om, E)$.}\label{sec:mop}

Given a locally compact (Hausdorff) space  $\Om$ and a Banach lattice $E$, we know that the centre $\Bm{\mZ} ( C_0(\Om, E))$ of the Banach lattice $C_0(\Om, E)$ is isometrically isomorphic to the space $C_b(\Om, \Bm{\mZ}(E)_\textbf{s})$ as commutative 1-Banach lattice algebras (see  \cite[Theorem 6.2, p.135]{ErcanWickstead96}). In his thesis, see \cite[Chapter 3, Proposition 3.6.6.1, p.113-116]{David2022}, the author proved that $C_b(\Om, \Bm{\mZ}(E)_\textbf{s})\cong \Ga_b(\Om, E_\textbf{z})$ where $ \Ga_b(\Om, E_\textbf{z})$ denotes the Banach lattice algebra of bounded continuous sections associated with the trivial bundle $E_\textbf{z}:=\Om \ti \Bm{\mZ}(E)_\textbf{s}$, by which we, in particualr, realize $\Bm{\mZ} ( C_0(\Om, E))\cong C_b(\Om, \Bm{\mZ}(E)_\textbf{s})\cong \Ga_b(\Om, E_\textbf{z})$ as isomorphism of AM m-lattice modules over the centre $ \Bm{\mZ}(C_0(\Om))\cong C_b(\Om)$.

Just a quick note: AM m-lattice modules over Banach lattice algebra $C_0(\Om)$ are also what we referred to as locally convex  $C_0(\Om)$-m-lattice modules or equivalently upper semicontinuous  $C_0(\Om)$-functions  m-lattice modules (see \cite[Chapter 2, Corollary 2.4.1.10, p.32]{David2022}). The author proved that, see \cite[Chapter 3, Proposition 3.5.0.9, p.85-88]{David2022}, these are precisely those Banach lattice modules over $C_0(\Om)$ which can uniquely be represented as Banach lattices of continuous sections of some topological bundles of Banach lattices over $\Om$. See also \cite[Corollary 7.28, p.78-79]{Gierz1982}.
\bigskip

Throughout we denote by $\mI\in \Bm{\mZ} ( C_0(\Om, E))$ the identity operator on $C_0(\Om, E)$, $I_E\in\Bm{\mZ}(E)_\textbf{s} $ the identity operator on $E$ and $e\in C_b(\Om, \Bm{\mZ}(E)_\textbf{s})$ the unit element of $C_b(\Om, \Bm{\mZ}(E)_\textbf{s})$. 
\bigskip 

With a continuous function $\phi : \Om \lo  \Bm{\mZ}(E)_\textbf{s}$ we associate a linear operator $\mM_\phi$ on $C_0(\Om, E)$ defined on its "maximal domain"  $D(\mM_\phi)$ in $C_0(\Om, E)$.  This definition is motivated by \cite[Definition 3.1, p.20]{engel2006short} and \cite[Definition 1.1, p.575]{Heymann2013}.

\begin{defn} (Multiplication and pointwise multiplication operators)\label{defn:mop1}
	The \textit{multiplication operator} $\mM_\phi $ induced on $C_0(\Om, E)$  by a  continuous function $\phi : \Om \lo  \Bm{\mZ}(E)_\textbf{s}$ is defined by
	\[\mM_\phi s=\phi \cc s,\text{ i.e., }\mM_\phi s(x):= \phi (x)s(x) \text{ for all } x\in \Om \text{ and domain}  \]
	\[ D(\mM_\phi):= \lb s\in C_0(\Om, E):\  \phi (\cd)s(\cd )\in C_0(\Om, E) \rb. \]

	In this context, we call the operators $\phi(x)$ with $x\in\Om$ the pointwise multiplication operators on $E$.
\end{defn}

We show in the following proposition certain relationships existing between the multiplication operator ($\mM_\phi,  D(\mM_\phi)$) and the continuous function $\phi$. This can be seen as a generalization of \cite[Chapter 1, Proposition 3.2, p.20-21]{engel2006short}.  See also  \cite[Proposition 2.3 \& Theorem 2.6, p.70]{Graser97}.

\begin{prop}\label{prop:mop1}
	Let $\mM_\phi : D(\mM_\phi)\sse C_0(\Om, E) \lo C_0(\Om, E); s \mt \phi\cc s$ be the multiplication operator induced by a continuous function $\phi : \Om \lo  \Bm{\mZ}(E)_\textbf{s}$. Then, we have the following.
	\bei	
	\item[(i)] The operator $(\mM_\phi,  D(\mM_\phi)  )$ is closed and densely defined.
	
	\item[(ii)] The operator $\mM_\phi$ is bounded (with $D(\mM_\phi) =C_0(\Om, E)$) if and only if $\phi$ is bounded, i.e., $\phi\in C_b(\Om, \Bm{\mZ}(E)_\textbf{s})$. In this case,  $\mM_\phi\in\Bm{\mZ} ( C_0(\Om, E))$ and one has
	$$||\mM_\phi||= ||\phi||= \sup_{x\in\Om}||\phi(x)||_{\sL(E)}.$$
	
	\item[(iii)] The operator $\mM_\phi$ has a bounded inverse if and only if $\phi$ has a bounded inverse $\phi^{-1}\in C_b(\Om, \Bm{\mZ}(E)_\textbf{s})$, i.e., $\phi(x)$ is invertible in $\Bm{\mZ}(E)_\textbf{s}$ for all $x\in\Om$, and the function $ x  \overset{\phi^{-1}}{\longmapsto}       \phi(x)^{-1}$ is in $C_b(\Om, \Bm{\mZ}(E)_\textbf{s})$.
	 In this case, one has
	\[  \mM_\phi^{-1}= \mM_{\phi^{-1} }.\]
	\item[(iv)] For the spectrum of $\mM_\phi$, we have that
		\[\sig(\mM_\phi)=  \Delta_\phi:=  \lb  \la\in\C :\la \in\bigcup_{x\in \Om}\sig(\phi(x)) \ \text{ or } \     \sup_{x\in\Om}|| R(\la, \phi(x))||_{\sL(E)}=\ty  \rb.\] 
	That is, the resolvent set of $\mM_\phi$ is given by 
	\[ \rho(\mM_\phi) = \Delta_u:= \lb \la\in\C : \la \in \bigcap_{x\in \Om} \rho(\phi(x)) \ \text{ and } \  \sup_{x\in\Om}|| R(\la, \phi(x))||_{\sL(E)}<\ty \rb.\]	
	\eei
\end{prop}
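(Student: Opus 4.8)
The plan is to prove the four assertions in sequence, exploiting throughout the pointwise structure of $\mM_\phi$ and the fact that $\Bm{\mZ}(E)_\textbf{s}$ is (after identification) a $C(Q)$-type AM-space with unit, so that each $\phi(x)\in\Bm{\mZ}(E)$ is a multiplication operator with $\sig(\phi(x))$ having a well-understood structure.

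For (i), I would first establish density of $D(\mM_\phi)$. The natural approach is to produce, for a fixed $s\in C_0(\Om,E)$ and $\ep>0$, an approximant in $D(\mM_\phi)$. Since $\phi$ is continuous into $\Bm{\mZ}(E)_\textbf{s}$ but possibly unbounded in the operator norm, I would use a cutoff: choose $\chi\in C_c(\Om)$ with $0\le\chi\le 1$ equal to $1$ on a large compact set $K$, and consider $\chi\cdot s$. On $K$ the function $\phi$ is bounded in norm by local compactness together with the uniform boundedness principle applied to the strongly continuous family $\{\phi(x):x\in K\}$, so $\phi\cc(\chi s)\in C_0(\Om,E)$, placing $\chi s\in D(\mM_\phi)$; letting $K$ exhaust $\Om$ gives density. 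For closedness I would take $s_n\to s$ in $C_0(\Om,E)$ with $\mM_\phi s_n=\phi\cc s_n\to g$, and argue pointwise: at each $x$, $\phi(x)s_n(x)\to \phi(x)s(x)$ by strong continuity together with norm convergence $s_n(x)\to s(x)$, while also $\phi(x)s_n(x)\to g(x)$, so $g=\phi\cc s$ and hence $s\in D(\mM_\phi)$ with $\mM_\phi s=g$.

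Part (ii) is the comparison of operator norms. One direction is immediate: if $\phi\in C_b(\Om,\Bm{\mZ}(E)_\textbf{s})$ then $\|\phi\cc s\|_\infty\le \big(\sup_x\|\phi(x)\|_{\sL(E)}\big)\|s\|_\infty$, giving boundedness with $\|\mM_\phi\|\le\|\phi\|$; moreover $\mM_\phi$ inherits the centre property $|\mM_\phi s|\le\lambda|s|$ pointwise, so $\mM_\phi\in\Bm{\mZ}(C_0(\Om,E))$. The reverse inequality and the "only if" I would obtain by testing on sections concentrated near a point: for each $x$ and each $z\in E$ one can build $s\in C_0(\Om,E)$ with $s(x)=z$ and $\|s\|_\infty=\|z\|$, whence $\|\phi(x)z\|=\|\mM_\phi s(x)\|\le\|\mM_\phi\|\,\|z\|$, giving $\|\phi(x)\|_{\sL(E)}\le\|\mM_\phi\|$ for all $x$ and therefore $\|\phi\|\le\|\mM_\phi\|$; if $\mM_\phi$ is bounded this shows $\phi$ is bounded.

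For (iii) and (iv) the key observation is that inversion and the resolvent act pointwise. In (iii), if $\phi^{-1}\in C_b(\Om,\Bm{\mZ}(E)_\textbf{s})$ then $\mM_{\phi^{-1}}$ is bounded by (ii) and is a two-sided inverse of $\mM_\phi$ by pointwise evaluation; conversely a bounded inverse of $\mM_\phi$ must be a multiplication operator in $\Bm{\mZ}(C_0(\Om,E))$ (since the centre is commutative and inverses of central elements are central), and reading off its pointwise action yields the bounded continuous $\phi^{-1}$. Part (iv) then follows by applying (iii) to $\phi-\lambda e$: we have $\lambda\in\rho(\mM_\phi)$ iff $\mM_{\phi-\lambda e}=\mM_\phi-\lambda\mI$ has bounded inverse iff $(\phi-\lambda e)^{-1}=R(\lambda,\phi(\cd))$ exists for every $x$ and is bounded in $C_b(\Om,\Bm{\mZ}(E)_\textbf{s})$, i.e.\ iff $\lambda\in\bigcap_x\rho(\phi(x))$ and $\sup_x\|R(\lambda,\phi(x))\|_{\sL(E)}<\infty$, which is exactly $\Delta_u$.

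The main obstacle I anticipate is controlling the norm-unboundedness of $\phi$ in the arguments for (i) and (iii): because $\phi$ is only continuous into $\Bm{\mZ}(E)_\textbf{s}$ (the strong operator topology), I cannot assume local norm-boundedness for free and must justify it, most cleanly via the uniform boundedness principle on compact subsets of $\Om$, and then verify that the putative resolvent function $x\mapsto R(\lambda,\phi(x))$ is genuinely strongly continuous — the condition $\sup_x\|R(\lambda,\phi(x))\|_{\sL(E)}<\infty$ is precisely what is needed to promote pointwise invertibility to membership in $C_b(\Om,\Bm{\mZ}(E)_\textbf{s})$ and hence to the boundedness of $\mM_{\phi^{-1}}$.
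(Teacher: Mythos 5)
Your plan coincides with the paper's proof in (i), (ii) and (iv): density via compactly supported truncations (the paper uses an approximate identity $(e_j)$ of $C_0(\Om)$, which is the same device as your cutoffs $\chi$; your extra point, that $C_c(\Om,E)\sse D(\mM_\phi)$ requires local norm-boundedness of $\phi$ supplied by uniform boundedness on compacta, is a legitimate refinement of a step the paper simply calls ``clear''); closedness by the identical pointwise-limit argument; the norm equality in (ii) by testing on sections $f_x\otimes z$ exactly as in the paper; and (iv) by reducing to (iii) applied to $\la e-\phi$, where the paper's resolvent-identity computation $R(x_\ga)z-R(x)z=R(x_\ga)\bigl[z-(\la-\phi(x_\ga))R(x)z\bigr]$ is precisely the verification you defer to, with the uniform bound $\sup_{x\in\Om}\|R(\la,\phi(x))\|_{\sL(E)}<\ty$ doing exactly the work you say it should.

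The one genuine gap is in your converse direction of (iii). You justify that a bounded inverse $\mT$ of $\mM_\phi$ lies in $\Bm{\mZ}(C_0(\Om,E))$ by appealing to ``inverses of central elements are central,'' but when $\phi$ is unbounded, $\mM_\phi$ is an unbounded operator and is not an element of the centre (indeed not of $\sL(C_0(\Om,E))$ at all), so inverse-closedness of the subalgebra $\Bm{\mZ}(C_0(\Om,E))\sse\sL(C_0(\Om,E))$ does not apply as stated. The paper's primary argument avoids this: from $\|s\|\le\|\mT\|\,\|\mM_\phi s\|$ it extracts the uniform lower bound $1/\|\mT\|\le\sup_{x\in\Om}\|\phi(x)s(x)\|_E$ over unit vectors $s\in D(\mM_\phi)$, and then, testing with sections $f_0\otimes z$ supported in a small open set, rules out $\inf_{x\in\Om}\inf_{\|z\|=1}\|\phi(x)z\|$ being too small, concluding that each $\phi(x)$ is invertible with $\sup_{x\in\Om}\|\phi(x)^{-1}\|$ controlled by $\|\mT\|$ (strong continuity of $x\mt\phi(x)^{-1}$ then follows by the same resolvent-identity trick as in (iv)). The paper does also sketch your inverse-closedness route as an ``equivalently'' remark, so your approach mirrors its secondary argument; but to make it complete you would need an intermediate step, e.g.\ showing $\mT$ commutes with the $C_0(\Om)$-module action (using surjectivity of $\mM_\phi$: $\mT(f\cd g)=\mT(f\cd\mM_\phi\mT g)=\mT\mM_\phi(f\cd\mT g)=f\cd\mT g$) and satisfies a lattice bound $|\mT s|\le\la|s|$, so that the Ercan--Wickstead identification $\Bm{\mZ}(C_0(\Om,E))\cong C_b(\Om,\Bm{\mZ}(E)_\textbf{s})$ yields $\mT=\mM_\psi$ with $\psi=\phi^{-1}$.
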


\begin{proof}
	\bei
	\item[(i)] It is clear that, the domain $D(\mM_\phi) $ always contain the space
	\[   C_c(\Om, E):=  \lb s\in C(\Om, E) : \text{supp }s \text{ is compact} \rb\] of all continuous functions $s : \Om \lo E$ having compact support
	\[\text{supp }s:=  \ov{ \lb x\in\Om : s(x)\ne 0\in E \rb}.  \]
	So, to prove that the operator $(\mM_\phi,  D(\mM_\phi)  )$  is densely defined, it suffices to show $C_c(\Om, E)$ is norm dense in $C_0(\Om, E).$ Indeed, if we take approximate identity $(e_j)_{j}$ of $C_0(\Om)$, and without loss of generality, we may assume that $e_j$ has compact support for each $j$, so that for every $s\in C_0(\Om, E)$, we have that $e_j\cd s\in C_c(\Om, E)$ for each $i$, and $\lim_{j}||s - e_j\cd s|| = 0$.

	Next, we show that $(\mM_\phi,  D(\mM_\phi)  )$ is a closed operator. Let $(s_n)_{n\in\N}\sse D(\mM_\phi)$ be a sequence converging to $s\in 	C_0(\Om, E)$ such that $\lim_{n\ra \ty} \phi \cc s_n =:s_0\in C_0(\Om, E)$ exists. Then, it must be that, for each $x\in \Om$, $\lim_{n\ra \ty}||s_n(x)-s(x)||_E=0$ and $\lim_{n\ra \ty}||\phi(x)s_n(x) - s_0(x)||_E=0$, from which it follows that $\lim_{n\ra \ty}||\phi(x)s_n(x)-\phi(x)s(x)||_E=0$. Thus, for each $x\in \Om$, $s_0(x)=\phi(x)s(x)$, so that $s\in D(\mM_\phi)$ and $s_0= \phi\cc s$.
	\item[(ii)] If $\phi\in C_b(\Om, \Bm{\mZ}(E)_\textbf{s})$, we have that
	\[  ||\mM_\phi s|| = \sup_{x\in\Om}||\phi(x)s(x)||_E \le ||\phi||||s||   \]       
	for every $s\in C_0(\Om, E)$; hence $\mM_\phi$ is bounded with $||\mM_\phi||\le ||\phi||$. And on the other hand, assume $\mM_\phi$ is bounded. Now, for each $x\in\Om$, let $f_x$ be a continuous function in $C_0(\Om)$ with compact support satisfying $||f_x||= 1 =f_x(x).$ And for (fixed) $z_0\in E$ with $||z_o||=1$, consider, for each $x\in\Om$, the continuous mapping $f_x\otimes z_0 : \Om \lo E ; y \mt (f_x\otimes z_0)(y):= f_x(y)z_0$. Then, we have that  $f_x\otimes z_0 \in C_c(\Om, E)$ with $||f_x\otimes z_0||=1$; and moreover,  for each $x\in\Om$, 
	\[||\phi(x)z_0||= ||\mM_\phi(f_x\otimes z_0  )(x)||\le ||\mM_\phi(f_x\otimes z_0  )||\le ||\mM_\phi||\]
	implies that $||\phi(x)z||\le ||\mM_\phi||$ for every $z\in E$ with $||z||=1$. And as such, we have that
	$||\phi(x)||\le ||\mM_\phi||$ for every $x\in \Om$; hence, $\phi\in C_b(\Om, \Bm{\mZ}(E)_\textbf{s})$ with $$||\phi||=\sup_{x\in\Om}||\phi(x)||_{\sL(E)}\le ||\mM_\phi||.$$
	The fact that $\mM_\phi$ is a multiplication operator on 	$C_0(\Om, E)$ follows, since the pointwise $(\phi(x)_{x\in\Om})$ are multiplication operators on $E$ which are continuously bounded on $\Om$. Indeed, for each $s\in C_0(\Om, E)$,
		\beqan
	|\mM_\phi s|(x)	 =    &  |\phi(x)s(x)| \\
	\le & ||\phi(x)||_{\sL(E)}|s(x)|\\
	\le & ||\phi|| \cd |s|(x)
	\eeqan
	for all $x\in\Om$, implies that $|\mM_\phi s|\le ||\phi||\cd |s|$. Thus $\mM_\phi\in\Bm{\mZ} ( C_0(\Om, E))$.

	\item[(iii)] Suppose $\phi$ has bounded inverse $\phi^{-1}\in C_b(\Om, \Bm{\mZ}(E)_\textbf{s})$. This implies that $\phi^{-1}\cc\phi =\phi\cc\phi^{-1} =e \in C_b(\Om, \Bm{\mZ}(E)_\textbf{s})$.  Then, the induced (bounded) multiplication operator $$\mM_{\phi^{-1}} : C_0(\Om, E) \lo C_0(\Om, E) ; s \mt \phi^{-1}\cc s$$ is the inverse of $\mM_\phi$, since  $\mM_{\phi^{-1}}\mM_\phi s = s$ for all $s\in D(\mM_\phi)$ and	
	$   \mM_\phi \mM_{\phi^{-1}} = \mM_e = \mI\in \Bm{\mZ} ( C_0(\Om, E))$. \bigskip

	Now, suppose $\mM_\phi$ has bounded inverse, say $\mT$, i.e., $\mT\mM_\phi s = s$ for all $s\in D(\mM_\phi)$ and $\mM_\phi\mT = \mM_e = \mI$. Then, we obtain
\[||s|| \le ||\mT|| \cd ||\mM_\phi s|| \q \text{ for all } s\in D(\mM_\phi),\]
	whence, the estimate
\[\de:= \frac{1}{||\mT||}\le \sup_{x\in\Om}||\phi(x)s(x)||_E \q \text{ for all } s\in D(\mM_\phi), \ ||s||=1.\]
	
	And suppose on the contrary, without loss of generality, we assume either $\inf_{x\in\Om}\inf_{||z||=1}||\phi(x)z||	<\frac{\de}{2}$ or $\inf_{x\in\Om} \frac{1}{||\phi(x)^{-1}||}<\frac{\de}{2}$; then we can find an open set $\mO \sse \Om$ such that either $\inf_{||z||=1}||\phi(x)z||<\frac{\de}{2}$ or $\frac{2}{\de}<\sup_{||z||=1}||\phi(x)^{-1}z||$ for all $x\in\mO$ respectively. Let $f_0$ be a continuous function in $C_0(\Om)$ such that $||f_0||=1$ and  $f_0(x)=0$ for all $x\in\Om\bk\mO$, and consider, for any $z\in E$ with $||z||=1$, the continuous mappings $f_0\otimes z : \Om \lo E ; x \mt (f_0\otimes z)(x):= f_0(x)z$; so that $f_0\otimes z \in D(\mM_\phi)$ and $||f_0\otimes z||=1$. Then, the assumptions imply either $\sup_{x\in\Om} ||\phi(x)(f_0\otimes z)(x)||\le\frac{\de}{2}$ or $\frac{2}{\de} >\sup_{x\in\Om}||\phi(x)^{-1}(f_0\otimes z)(x) ||   =\sup_{x\in\Om}||\mT(f_0\otimes z)(x)||  $ for some $z\in E$ respectively, which is contradicting the above estimate.

	Hence, we can conclude that each $\phi(x)$ is invertible for all $x\in\Om$ and $\sup_{x\in\Om}||\phi(x)^{-1}||= ||\mT || $, which would implies $\phi$ has bounded inverse $\phi^{-1}\in C_b(\Om, \Bm{\mZ}(E)_\textbf{s})$; so that $\mM_{\phi^{-1}}$ is the bounded multiplication operator on  $C_0(\Om, E)$ which coincides with the bounded inverse $\mT$.	
	
	Equivalently, $\Bm{\mZ} ( C_0(\Om, E))$ is an inverse-closed subalgebra of  $\sL( C_0(\Om, E)) $ and $ \mT(\phi \cc s) = s = \phi \cc \mT s$ for all $s\in D(\mM_\phi)$; it must be that the bounded inverse $\mT$ is a bounded multiplication operator on  $C_0(\Om, E)$, i.e., there exists $\psi\in C_b(\Om, \Bm{\mZ}(E)_\textbf{s})$ such that $\mM_\psi = \mT$. 	It, thus, follows that $\phi \cc \psi = e=\psi \cc\phi $, so that $\phi$ has bounded inverse $\phi^{-1}:=\psi$.
	
	\item[(iv)] By definition, one has $\la\in \sig(\mM_\phi)$ if and only if $\la - \mM_\phi = \mM_{\la -\phi}$ does not have bounded inverse. Thus, by (iii) above, it suffices to show that $\la -\phi$ has bounded inverse if and only if $\la\in\Delta_u$. The forward implication is clear, that  is if $\la -\phi$ has inverse  $ x  \overset{{(\la-\phi)}^{-1}}{\longmapsto} {(\la -\phi(x))}^{-1}$ in $C_b(\Om, \Bm{\mZ}(E)_\textbf{s})$, then $\la\in\Delta_u$. For the reverse, suppose $\la\in\Delta_u$ and consider the mapping $R: \Om \lo \Bm{\mZ}(E)_\textbf{s}; x \mt R(x):=R(\la,\phi(x) )= (\la -\phi(x))^{-1}$. Then, we see that $R$ is the pointwise inverse of $\la -\phi$  and is bounded. To prove that $R$ is continuous, let $x_\ga$ be a net in $\Om$ converging to $x$ in $\Om$. For all $\ep>0$ and each $z\in E$, using the continuity of $\la -\phi$, we eventually have $||z - (\la -\phi(x_\ga))R(x)z||<\ep$. As such, for each $z\in E$, we obtain
		\beqan
	||R(x_\ga)z - R(x)z||	 =    &   || R(x_\ga)[z - (\la -\phi(x_\ga))R(x)z] || \\
	\le & ||R(x_\ga)||_{\sL(E)}\ep
	\eeqan
	eventually, which can be made small as much as desired, since $R$ is bounded, i.e., $||R||:= \sup_{x\in\Om}||R(x)||_{\sL(E)}<\ty$. Thus, $R\in C_b(\Om, \Bm{\mZ}(E)_\textbf{s})$ which is the desired bounded inverse of $\la -\phi$.
	\eei
\end{proof}

\section{Multiplication semigroup on $ C_0(\Om, E)$ generated by  $(\mM_\phi,  D(\mM_\phi)  )$}\label{sec:msg}

Given a continuous function $\phi : \Om \lo  \Bm{\mZ}(E)_\textbf{s}$, we associate, for each $t\ge 0$, the exponential function \[e^{t\phi}:\Om \lo  \Bm{\mZ}(E)_\textbf{s};\ x \mt e^{t\phi(x)}   \]
which can immediately be seen to be continuous.
Therefore, we  obtain certain multiplication semigroup $(\mT_\phi(t))_{t\ge 0}$  on  $ C_0(\Om, E)$ defined, for each $t\ge 0$, by
\[  s(\cd) \mt \mT_\phi(t)s(\cd):=e^{t\phi(\cd)}s(\cd) \]
which each $\mT_\phi(t)$ is bounded if and only if $e^{t\phi} \in  C_b(\Om, \Bm{\mZ}(E)_\textbf{s})$, using Proposition \ref{prop:mop1}(ii). Indeed, we see that $\mT_\phi(0)=\mI$, and $\mT_\phi(t_1+t_2)= \mT_\phi(t_1)\mT_\phi(t_2)$ for all $t_1,t_2\ge0$. And consequently, $\mT_\phi(t)\in \Bm{\mZ} ( C_0(\Om, E))$ and
\[  ||\mT_\phi(t)|| = ||e^{t\phi}||= \sup_{x\in\Om}||e^{t\phi(x)}||_{\sL(E)} \ \text{ for all } t\ge 0.\]

\begin{rem}\label{rem:msg1}
	We show that the condition $\sup_{x\in\Om}||e^{t_0\phi(x)}||_{\sL(E)}<\ty$ for some $t_0\in(0,1]$ is only what is required for each of the multiplication operator $\mT_\phi(t)$ to be bounded on  $C_0(\Om, E)$, and not necessarily that  $\sup_{x\in\Om}||\phi(x)||_{\sL(E)}<\ty$. To this end, let $q :\Om \lo \C$ be an unbounded continuous function which is bounded from above, i.e., $\sup_{x\in\Om}|q(x)|=\ty$, but  $\sup_{x\in\Om}Req(x)<\ty$. Then the mapping $qI_E : \Om \lo \Bm{\mZ}(E)_\textbf{s}; x \mt q(x)I_E$ is continuous but unbounded since $\sup_{x\in\Om}||q(x)I_E||_{\sL(E)}=   \sup_{x\in\Om}|q(x)|=\ty$. However, the fact that the exponential functions
	\[ e^{tqI_E}: \Om \lo  \Bm{\mZ}(E)_\textbf{s}; \ x\mt  e^{tq(x)I_E }  \]
	are continuous and bounded, since  for each $t\in(0,t_0]$,
	\beqan
\sup_{x\in\Om}||e^{tq(x)I_E}||_{\sL(E)}	 =    &   \sup_{x\in\Om} |e^{tq(x)}|   \\
	= & \sup_{x\in\Om} e^{t Re q(x)}\\
		= & e^{t \sup_{x\in\Om}Re q(x)}<\ty,
	\eeqan
implies that the multiplication operators $\mT_{qI_E}(t)$ are bounded on  $C_0(\Om, E)$, and $ ||\mT_{qI_E}(t)|| = e^{t \sup_{x\in\Om}Re q(x)}$ for each $t\in(0,t_0]$. Hence, in general, given that $||\mT_\phi(t_0)||=\sup_{x\in\Om}||e^{t_0\phi(x)}||_{\sL(E)}<\ty$, then every other $\mT_\phi(t)$ would be bounded since every $t>t_0$ can be written uniquely as $t=nt_0+r$ for some $n\in\N$ with $r\in (0, t_0]$ such that $\mT_\phi(t)=\mT_\phi(t_0)^n \mT_\phi(r)$. 
	
\end{rem}

We now introduce multiplication semigroup induced by (generally) unbounded  multiplication operator $(\mM_\phi,  D(\mM_\phi)  )$. This definition is motivated by \cite[Definition 3.3, p.22]{engel2006short}.

\begin{defn}\label{defn:msg1}
Let  $\phi : \Om \lo  \Bm{\mZ}(E)_\textbf{s}$ be a continuous function such that  \[\sup_{x\in\Om}||e^{t_0\phi(x)}||_{\sL(E)}<\ty \ \text{ for some }\ t_0\in (0,1].  \] Then, we call the semigroup  $(\mT_\phi(t))_{t\ge 0}$ comprised of (bounded) multiplication operators
\[\mT_\phi(t): C_0(\Om, E) \lo C_0(\Om, E);\ s(\cd)\mt  \mT_\phi(t)s(\cd)=e^{t\phi(\cd)}s(\cd)   \]
for all $t\ge0$, the multiplication semigroup generated by the multiplication operator $(\mM_\phi,  D(\mM_\phi)  )$ on $C_0(\Om, E)$.
\end{defn}

The following Proposition justifies the previous definition. These can be considered as generalizations of  \cite[Proposition 3.4, p.23]{engel2006short} and  \cite[Proposition 3.5, p.23-24]{engel2006short}. See also \cite[Lemma 2.8, p.50]{engel2006short}.

\begin{prop}\label{prop:msg1}
	Let $\phi : \Om \lo  \Bm{\mZ}(E)_\textbf{s}$ be a continuous function such that  $\sup_{x\in\Om}||e^{t_0\phi(x)}||_{\sL(E)}<\ty$ for some $t_0\in(0,1]$. Furthermore, let $(\mT_\phi(t))_{t\ge 0}$ be the multiplication semigroup generated by $(\mM_\phi,  D(\mM_\phi)  )$ on $C_0(\Om, E)$ as in Definition \ref{defn:msg1}. Then we have the following.
	
	\bei
	\item[(i)] $(\mT_\phi(t))_{t\ge 0}$ is a $C_0$-semigroup on $C_0(\Om, E)$, i.e., the mapping 
	\[  \R_+ \lo C_0(\Om, E);\ t \mt \mT_\phi(t)s(\cd)  \]
	is continuous  for each $s\in C_0(\Om, E)$; and its (infinitesimal) generator is the multiplication operator $(\mM_\phi,  D(\mM_\phi)  )$ on $C_0(\Om, E)$.
	
	\item[(ii)] The multiplication $C_0$-semigroup $(\mT_\phi(t))_{t\ge 0}$ is uniformly continuous if and only if $\phi\in C_b(\Om, \Bm{\mZ}(E)_\textbf{s})$.
	\eei
\end{prop}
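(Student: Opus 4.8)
For part (i), the plan is to establish strong continuity and then identify the generator. The strong continuity reduces to showing $\lim_{t\to 0^+}\|\mT_\phi(t)s - s\| = 0$ for each $s\in C_0(\Om, E)$, since the semigroup law then upgrades this to continuity at every $t\ge 0$. I would first verify this on the dense subspace $C_c(\Om, E)$, where the uniform continuity hypothesis per point combines with compactness of the support to give uniform control: for $s\in C_c(\Om, E)$ with support $K$, one writes $\mT_\phi(t)s(x) - s(x) = (e^{t\phi(x)} - I_E)s(x)$ and estimates $\|e^{t\phi(x)} - I_E\|_{\sL(E)}$ uniformly over $x\in K$. The uniform boundedness $\|\mT_\phi(t)\|\le M$ on some interval $[0,t_0]$ (guaranteed by Remark \ref{rem:msg1}) then lets me pass from the dense subspace to all of $C_0(\Om, E)$ by a standard $\tfrac{\ep}{3}$-argument. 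For the generator, let $(A, D(A))$ denote the generator of $(\mT_\phi(t))_{t\ge 0}$; the plan is to show $A = \mM_\phi$ by a two-sided inclusion. For $s\in D(\mM_\phi)$ one computes $\tfrac1t(\mT_\phi(t)s - s) = \tfrac1t(e^{t\phi(\cd)} - I_E)s(\cd)$ and shows this converges in $C_0(\Om, E)$ to $\phi\cc s = \mM_\phi s$, giving $\mM_\phi \sse A$. The reverse inclusion $A\sse \mM_\phi$ follows because both $(\mM_\phi, D(\mM_\phi))$ and $(A, D(A))$ are generators (hence closed operators with nonempty resolvent set); since $\mM_\phi$ generates a $C_0$-semigroup and agrees with $A$ on $D(\mM_\phi)$, and a generator has no proper generator extension, the two coincide.

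For part (ii), I would invoke the standard characterization that a $C_0$-semigroup is uniformly continuous if and only if its generator is bounded. One direction is immediate from Proposition \ref{prop:mop1}(ii): if $\phi\in C_b(\Om, \Bm{\mZ}(E)_\textbf{s})$, then $\mM_\phi$ is bounded, so $(\mT_\phi(t))_{t\ge 0}$ is uniformly continuous (and in fact $\mT_\phi(t) = e^{t\mM_\phi}$). Conversely, if $(\mT_\phi(t))_{t\ge 0}$ is uniformly continuous, its generator $\mM_\phi$ is bounded, and then Proposition \ref{prop:mop1}(ii) forces $\phi\in C_b(\Om, \Bm{\mZ}(E)_\textbf{s})$. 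The only subtlety here is making precise the equivalence "$\mT_\phi$ uniformly continuous $\iff$ $\lim_{t\to 0^+}\|\mT_\phi(t) - \mI\| = 0$" and then extracting boundedness of the generator, which is the content of the classical uniform-continuity theorem.

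The main obstacle I anticipate is the strong-continuity estimate in part (i). The difficulty is that convergence $e^{t\phi(x)}\to I_E$ is only \emph{strong} (pointwise in $E$), not uniform in the operator norm, because $\phi$ takes values in $\Bm{\mZ}(E)_\textbf{s}$ endowed with the strong operator topology. Thus $\sup_{x\in K}\|e^{t\phi(x)} - I_E\|_{\sL(E)}$ need not tend to $0$. The resolution is that one does not need operator-norm control: for fixed $s$ one only needs $\sup_{x\in K}\|(e^{t\phi(x)} - I_E)s(x)\|_E \to 0$, and here the hypothesis that each pointwise semigroup $T_x(t) = e^{t\phi(x)}$ is uniformly continuous, together with the continuity of $\phi$ into $\Bm{\mZ}(E)_\textbf{s}$ and compactness of $K$, should furnish the needed uniformity via a net/covering argument on $K$. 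Handling this interplay between the strong topology on the centre and the norm topology on $C_0(\Om, E)$ carefully is the crux of the argument.
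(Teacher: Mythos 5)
Your plan for strong continuity in (i) and your entire part (ii) match the paper's argument in substance (the paper does the compact-set splitting directly on all of $C_0(\Om,E)$ rather than via $C_c(\Om,E)$ plus an $\ep/3$-argument, and it offers your generator-bounded equivalence in (ii) verbatim as its ``equivalently'' route, alongside a direct construction of a sequence $t_n\da 0$ with $\|\mI-\mT_\phi(t_n)\|\ge\de$ when $\phi$ is unbounded). The genuine gap is your reverse inclusion $A\sse\mM_\phi$. You justify it by ``both $(\mM_\phi,D(\mM_\phi))$ and $(A,D(A))$ are generators \dots\ a generator has no proper generator extension,'' but that $\mM_\phi$ generates a $C_0$-semigroup is precisely the conclusion of part (i); Definition \ref{defn:msg1} only \emph{names} $\mT_\phi$ the semigroup generated by $\mM_\phi$, it does not establish generation, so as written the step is circular. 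Two noncircular repairs: (a) the paper's direct route --- if $\lim_{t\da 0}t^{-1}(\mT_\phi(t)s-s)$ exists in $C_0(\Om,E)$, then evaluating pointwise shows the limit function equals $\phi(x)s(x)$ at every $x$, hence $\phi\cc s\in C_0(\Om,E)$, i.e.\ $s\in D(\mM_\phi)$, giving $D(A)\sse D(\mM_\phi)$ with no appeal to abstract maximality; or (b) keep your inclusion $\mM_\phi\sse A$ and use that $\mM_\phi$ is closed with $\la-\mM_\phi$ surjective for $\mathrm{Re}\,\la$ large (via Proposition \ref{prop:mop1}(iii)--(iv) together with the uniform exponential bound $\sup_{x\in\Om}\|e^{t\phi(x)}\|_{\sL(E)}\le Me^{wt}$, which makes the pointwise resolvents uniformly bounded); then injectivity of $\la-A$ for $\la\in\rho(A)$ forces $D(A)=D(\mM_\phi)$.

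Your diagnosis of the ``main obstacle'' is also off, in an instructive way: $\sup_{x\in K}\|e^{t\phi(x)}-I_E\|_{\sL(E)}$ \emph{does} tend to $0$ on every compact $K$, and this is exactly what the paper uses. Indeed, strong continuity of $\phi$ makes $x\mt\|\phi(x)z\|_E$ bounded on $K$ for each $z\in E$, so the uniform boundedness principle gives $M_K:=\sup_{x\in K}\|\phi(x)\|_{\sL(E)}<\ty$, whence $\|e^{t\phi(x)}-I_E\|_{\sL(E)}\le e^{tM_K}-1\to 0$ uniformly on $K$; your proposed covering/net argument would in any case need this same UBP-derived bound $M_K$ to control $\|e^{t\phi(x)}\|$ near each point, so nothing is gained by avoiding the operator-norm estimate. (A smaller point to make explicit in either write-up: the uniform bound $\sup_{t\in[0,t_0]}\|\mT_\phi(t)\|<\ty$ that your density argument needs is not literally stated in Remark \ref{rem:msg1}; it follows, e.g., from the representation $\Bm{\mZ}(E)\cong C(Q)$, which gives $\|e^{t\phi(x)}\|_{\sL(E)}=e^{tc(x)}$ with $c(x)$ bounded above by hypothesis.)
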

\begin{proof}
	\bei
	\item[(i)] Let $s\in C_0(\Om, E)$ with $||s||\le 1$. And for $\ep>0$, we can choose a compact subset $K\sse \Om$, such that $||s(x)||\le \frac{\ep}{||e^{\phi}|| +1}    $ for all $x\in\Om \bk K$. Moreover, we note that, the fact that the pointwise multiplication operators $(\phi(x)_{x\in\Om})$ induced uniformly continuous $C_0$-semigroup $( e^{t\phi(x)})_{t\ge 0} $ on $E$ generated by $\phi(x)\in\Bm{\mZ}(E)_\textbf{s}$  for each $x\in\Om$, i.e., the mapping 
	\[ \R_+ \lo \Bm{\mZ}(E)_\textbf{s};\ t \mt e^{t\phi(x)}\] is uniformly continuous for each $x\in\Om$, implies that we can find $t_1\in(0, 1]$ such that $||e^{t\phi(x)} - I_E||_{\sL(E)}\le \ep$ for all $x\in K$ and $0\le t\le t_1$. And therefore, we obtain that
	\beqan
	 & ||\mT_\phi(t)s - s||   \\
	 =    &   \sup_{x\in\Om}||e^{t\phi(x)}s(x)  -s(x)       ||    \\
	= & \sup_{x\in\Om} ||( e^{t\phi(x)} -    I_E) s(x) ||           \\
	\le & \sup_{x\in K} ||( e^{t\phi(x)} -    I_E)||_{\sL(E)} || s(x) ||  + \sup_{x\in \Om\bk K}||( e^{t\phi(x)} -    I_E)||_{\sL(E)} || s(x) ||\\
	\le &     \ep \sup_{x\in K}|| s(x) || +   \sup_{x\in \Om\bk K} ||( e^{t\phi(x)} -    I_E)||_{\sL(E)}  \frac{\ep}{||e^{\phi}|| +1} \\
	\le &    \ep +\ep \\
	= & 2\ep
	\eeqan
	for all  $0\le t\le t_1$, showing that  $(\mT_\phi(t))_{t\ge 0}$ is strongly continuous, and hence a $C_0$-semigroup on $C_0(\Om, E)$. \bigskip

	Next, we show that  $(\mM_\phi,  D(\mM_\phi)  )$ is indeed the  (infinitesimal) generator of the  multiplication $C_0$-semigroup $(\mT_\phi(t))_{t\ge 0}$ on  $C_0(\Om, E)$. To this ends, suppose that $(\mM, D(\mM))$ is the generator of the $C_0$-semigroup $(\mT_\phi(t))_{t\ge 0}$ on  $C_0(\Om, E)$.
	
	Let $s\in D(\mM_\phi)$; $x \overset{e^{t\phi}s }{\longmapsto} e^{t\phi(x)}s(x)$ is in $C_0(\Om, E)$, and $\lim_{t\da 0} \frac{ e^{t\phi(x)}s(x) -s(x)}{t} = \phi(x)s(x)= (\mM_\phi s)(x)$ implies that $s\in D(\mM)$; so that  $D(\mM_\phi)\sse D(\mM)$ and $\mM s = \phi\cc s = \mM_\phi s$ for all $s\in D(\mM_\phi)$. \bigskip
	
	And on the other hand, for $s\in C_0(\Om, E)$ the fact that 
	  \[  \lim_{t\da 0} \frac{ e^{t\phi}s-s}{t} \text{ exists}  \iff  x \mt    \lim_{t\da 0} \frac{ e^{t\phi(x)}s(x) -s(x)}{t} = \phi(x)s(x)  \text{ exists }     \]
	implies that $D(\mM)\sse D(\mM_\phi)$ and $\mM_\phi s = \mM s$ for all $s\in D(\mM)$. Hence, $(\mM_\phi,  D(\mM_\phi)  ) = (\mM, D(\mM))$.

	\item[(ii)]  Suppose $\phi\in C_b(\Om, \Bm{\mZ}(E)_\textbf{s})$. Then, see Proposition \ref{prop:mop1}(ii),  $\mM_\phi$ is a bounded multiplication operator and therefore generates a uniformly continuous multiplication semigroup $  (e^{t\mM_\phi} )_{t\ge0}$ on $C_0(\Om, E)$ which must coincides with the semigroup $(\mT_\phi(t))_{t\ge 0}$, since $ e^{t\mM_\phi}s(\cd) = e^{t\phi(\cd)}s(\cd) = \mT_\phi(t)s(\cd)  $ for all  $s\in C_0(\Om, E)$ and $t\ge 0$. 
	
	Now, on the contrary, assume  $\phi : \Om \lo  \Bm{\mZ}(E)_\textbf{s}$ is continuous but unbounded. Let $(x_n)\sse \Om$ be a sequence such that  $0<||\phi(x_n)||_{\sL(E)}\ \forall n\in\N$ and $\lim_{n\ra \ty}||\phi(x_n)||_{\sL(E)}=\ty$. As such, we can find $(z_n)_n\in E$ with $||z_n||=1$ such that $\lim_{n\ra \ty}||\phi(x_n)z_n||=\ty$. Moreover, $||I_E-e^T||>0$ for every $T\in  \Bm{\mZ}(E)_\textbf{s}$ with $||T||_{\sL(E)}=1$, implies there exists $\de>0$ such that $||z_n - e^{t_n\phi(x_n)}z_n||\ge\de \ \forall\ n\in\N$ where $t_n:=1/||\phi(x_n)||$. 
	
	Choose $f_n\in C_0(\Om)$ with $||f_n||=1=f_n(x_n) $ and let $s_n:=f_n\otimes z_n$, i.e., $s_n(\cd)= f_n(\cd)z_n$ so that $s_n\in C_0(\Om, E)$, $s_n(x_n)=z_n$ and $||s_n||=1 \ \forall\ n\in\N$. 
	
	Then, we obtain that 
		\beqan
\de	\le & ||z_n - e^{t_n\phi(x_n)}z_n||   \\
	=   & ||s_n(x_n) -  e^{t_n\phi(x_n)}s_n(x_n)||          \\
	\le &  ||s_n -  e^{t_n\phi}s_n ||         \\
	\le &  ||\mI -  \mT_\phi(t_n)|| 
	\eeqan
$ \forall\ n\in\N$. And since $t_n \ra 0$ in $\R_+$, this implies that  $(\mT_\phi(t))_{t\ge 0}$ is not uniformly continuous.	
	
Equivalently, since the $C_0$-semigroup $(\mT_\phi(t))_{t\ge 0}$ is uniformly continuous if and only if its generator  $(\mM_\phi,  D(\mM_\phi)  )$ is bounded,  Proposition \ref{prop:mop1}(ii) implies that this is the case if and only if  $\phi\in C_b(\Om, \Bm{\mZ}(E)_\textbf{s})$.
	\eei
\end{proof}

\section{Characterization of multiplication semigroups on $ C_0(\Om, E)$ }

 In the following Lemma, we obtain the complete characterization of multiplication $C_0$-semigroups on $ C_0(\Om, E)$ as those arising from continuous functions $\phi : \Om \lo  \Bm{\mZ}(E)_\textbf{s}$ as in Definition \ref{defn:msg1}.  This can been seen as the generalization of   \cite[Proposition 3.6, p.24-25]{engel2006short}. 
 \begin{lemma}\label{lem:msg2}
 	Let  $m_t\in C_b(\Om, \Bm{\mZ}(E)_\textbf{s})$ for each $t\ge0$ such that the induced multiplication semigroup $\mT(t)_{t\ge0}$ given by
 	
 	\[ \mT(t) : C_0(\Om, E) \lo C_0(\Om, E);\ s(\cd)\mt \mT(t)s(\cd):=m_t(\cd)s(\cd)     \]
 	is a $C_0$-semigroup on $C_0(\Om, E)$. Furthermore, suppose that, for each $x\in\Om$, the induced pointwise multiplication semigroup $T_x(t)_{t\ge0}$ given by
 	\[T_x(t): E \lo E;\ z\mt T_x(t)z:=m_t(x)z    \]
 	is uniformly continuous on $E$. Then, there exists a continuous function $\phi : \Om \lo  \Bm{\mZ}(E)_\textbf{s}$, where $(T_x(t))_{t\ge0}= (e^{t\phi(x)})_{t\ge0}$ for each $x\in\Om$ with
 	\[\sup_{x\in\Om}||e^{t_0\phi(x)}||_{\sL(E)}<\ty \ \text{ for some }\ t_0\in (0,1]  \]
 	such that $m_t(\cd)= e^{t\phi(\cd)}$ for all $t\ge0$.
 \end{lemma}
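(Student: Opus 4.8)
The plan is to follow the template of the scalar case \cite[Proposition 3.6]{engel2006short}: extract $\phi$ pointwise as the generator of each $T_x(\cd)$, and then promote the pointwise data to a continuous section. First I would record the algebraic consequences of the semigroup law. Evaluating $\mT(0)=\mI$ and $\mT(t+s)=\mT(t)\mT(s)$ at a point $x\in\Om$ gives $m_0(x)=I_E$ and $m_{t+s}(x)=m_t(x)m_s(x)$, so that $(T_x(t))_{t\ge0}$ is a one-parameter semigroup on $E$ for every $x$. By hypothesis each such semigroup is uniformly continuous, hence has a bounded generator $\phi(x)\in\sL(E)$ with $T_x(t)=m_t(x)=e^{t\phi(x)}$; this already yields $m_t(\cd)=e^{t\phi(\cd)}$ for all $t\ge0$. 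Since $\phi(x)=\lim_{t\da0}t^{-1}(m_t(x)-I_E)$ is an operator-norm limit of elements of the norm-closed subalgebra $\Bm{\mZ}(E)$ (each $m_t(x)\in\Bm{\mZ}(E)$), we get $\phi(x)\in\Bm{\mZ}(E)$, so $\phi$ is at least a well-defined map $\Om\lo\Bm{\mZ}(E)_\textbf{s}$.

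Next I would dispose of the growth bound. Because $m_{t_0}\in C_b(\Om,\Bm{\mZ}(E)_\textbf{s})$, Proposition \ref{prop:mop1}(ii) gives $\sup_{x\in\Om}\|e^{t_0\phi(x)}\|_{\sL(E)}=\|m_{t_0}\|<\ty$ for every $t_0\in(0,1]$, which is the stated condition. I would then upgrade this to a bound that is uniform in $x$ and exponential in $t$: since $(\mT(t))_{t\ge0}$ is a $C_0$-semigroup it is exponentially bounded, $\|\mT(t)\|\le Me^{\om t}$, and $\|\mT(t)\|=\|m_t\|=\sup_{x}\|m_t(x)\|_{\sL(E)}$ again by Proposition \ref{prop:mop1}(ii); hence $\|e^{t\phi(x)}\|_{\sL(E)}\le Me^{\om t}$ for all $x\in\Om$ and $t\ge0$. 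Consequently, for $\operatorname{Re}\la>\om$ one has $\la\in\rho(\phi(x))$ for all $x$, with the Laplace representation $R(\la,\phi(x))=\int_0^\ty e^{-\la s}m_s(x)\,ds$ and the uniform estimate $\|R(\la,\phi(x))\|_{\sL(E)}\le M/(\operatorname{Re}\la-\om)$.

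The heart of the matter, and the step I expect to be the main obstacle, is the continuity of $\phi:\Om\lo\Bm{\mZ}(E)_\textbf{s}$, i.e. continuity of $x\mt\phi(x)z$ for each $z\in E$. The pointwise formula $\phi(x)z=\lim_{t\da0}t^{-1}(m_t(x)z-z)$ is of no direct use, since the limit is not uniform in $x$ (indeed $\sup_x\|\phi(x)\|_{\sL(E)}$ is typically infinite, cf.\ Remark \ref{rem:msg1}). My plan is to work through the resolvent. Using joint continuity of the $C_0$-semigroup action together with the uniform integrable domination furnished by the bound above, I would first show that the resolvent section $R(\la,\phi(\cd))$ lies in $C_b(\Om,\Bm{\mZ}(E)_\textbf{s})$ and, equivalently, via the generator $(\mM,D(\mM))$ of $(\mT(t))_{t\ge0}$, that $(\mM s)(x)=\phi(x)s(x)$ for every $s$ in the dense domain $D(\mM)=R(\la,\mM)C_0(\Om,E)$. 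The genuinely delicate point is then to pass from these bounded, strongly continuous data back to $\phi(x)=\la I_E-R(\la,\phi(x))^{-1}$: inversion is not continuous for the strong operator topology, so one cannot simply invert $R(\la,\phi(\cd))$. What is really needed is \emph{local norm-boundedness} of the generators, $\sup_{x\in K}\|\phi(x)\|_{\sL(E)}<\ty$ for every compact $K\sse\Om$. Granting this, the decomposition $\phi(x)z=(\mM s)(x)+\phi(x)(z-s(x))$ for a well-chosen $s\in D(\mM)$ with $s(x_0)=z$, combined with the continuity of $\mM s$ and a Banach--Steinhaus argument, would deliver continuity of $\phi$ at each $x_0$. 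Establishing this local uniformity is exactly where the full force of the hypothesis that $(\mT(t))_{t\ge0}$ be a $C_0$-semigroup (and not merely that each $T_x(\cd)$ be uniformly continuous) must be exploited; I regard this as the crux, since the bare pointwise data seem insufficient to control $\|\phi(x)\|_{\sL(E)}$ along nets $x\to x_0$. Once continuity of $\phi$ is secured, the assertions $m_t(\cd)=e^{t\phi(\cd)}$, the growth bound, and membership of $\phi$ in the required section space follow at once, completing the argument.
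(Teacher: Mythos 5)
Your first half coincides with the paper's own proof: strong continuity of $\mT(t)_{t\ge0}$ makes each $T_x(\cdot)$ a $C_0$-semigroup on $E$, uniform continuity yields a bounded generator $\phi(x)=\lim_{t\downarrow0}t^{-1}(T_x(t)-I_E)$ in operator norm with $T_x(t)=e^{t\phi(x)}$, norm-closedness of $\Bm{\mZ}(E)$ in $\sL(E)$ puts $\phi(x)$ in the centre, and $\sup_{x\in\Om}\|e^{t_0\phi(x)}\|_{\sL(E)}=\|m_{t_0}\|<\infty$ follows from Proposition \ref{prop:mop1}(ii), exactly as the paper does it. Your Laplace-transform estimates for $R(\la,\phi(x))$ are correct but play no role in the paper and, as your own discussion concedes, do not by themselves close the argument.

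The genuine gap is the continuity of $\phi:\Om\lo\Bm{\mZ}(E)_\textbf{s}$, which is the actual content of the lemma: you reduce it to local norm-boundedness, $\sup_{x\in K}\|\phi(x)\|_{\sL(E)}<\infty$ for compact $K\subseteq\Om$, write ``granting this,'' and never supply an argument for it — indeed you state explicitly that you do not see how to get it. A proposal whose self-identified crux is left as an assumption is not a proof. For comparison, the paper argues directly by contradiction, with no resolvents and no local bound: if $x_n\to x$ but $\|\phi(x)z_0-\phi(x_n)z_0\|\ge\ep$ for all $n$ and some normalized $z_0$, it replaces both generators by difference quotients at a common small time and deduces $\ep\le\|t^{-1}(T_x(t)-T_{x_n}(t))z_0\|$ for all $n$ and $0<t\le t_1$, contradicting the (strong) continuity of $x\mapsto m_t(x)$ for fixed $t$. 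It is worth noting that this step tacitly chooses $t_1$ independently of $n$, although the quality of the approximation $t^{-1}(T_{x_n}(t)z_0-z_0)\approx\phi(x_n)z_0$ degrades like $t\,\|\phi(x_n)\|_{\sL(E)}^2$ — which is precisely the uniformity problem you isolated. So your diagnosis of the delicate point is sharp, arguably sharper than the paper's treatment of it, but your proposal stops where the proof has to begin: to complete it you must either derive the local norm-boundedness from the standing hypotheses, or run the paper's contradiction argument and justify the uniform choice of $t_1$ (e.g.\ by first passing to a subsequence along which $\|\phi(x_n)\|_{\sL(E)}$ is controlled, or by quantifying the hypothesis of uniform continuity of the pointwise semigroups).
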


\begin{proof}

By the strong continuity of $\mT(t)_{t\ge0}$ on $C_0(\Om, E)$, i.e., the continuity of the mapping
\[  \R_+ \lo C_0(\Om, E);\ t \mt m_t(\cd)s(\cd)  \]
 for each $s\in C_0(\Om, E)$; we obtain that, the pointwise multiplication semigroup $T_x(t)_{t\ge0}$ on $E$ given by
 \[ T_x(t) : E \lo E; \ z \mt m_t(x)z      \]
is a $C_0$-semigroup on $E$ for each $x\in\Om$. 
In particular, given that each $T_x(t)_{t\ge0}$  is uniformly continuous implies that it is generated by a bounded multiplication operator, say $\phi(x)\in\Bm{\mZ}(E)_\textbf{s}$ for each $x\in\Om$. Indeed, we would have that $\phi(x):=\lim_{t\da 0} \frac{T_x(t) -I_E}{t} \in\sL(E)$ in the operator norm  such that $T_x(t)= e^{t\phi(x)}$  for all $t\ge0$ and each $x\in\Om$; and since $\Bm{\mZ}(E)$ is norm-closed in $\sL(E)$, it follows that $\phi(x)\in \Bm{\mZ}(E)_\textbf{s}$ for all $x\in\Om.$ See also \cite[Proposition 5.15, p.288]{arendt1986}.

Now, since $\mT(t)s(\cd) = T_{(\cd)}(t)s(\cd)$ for all $t\ge0$ and $s\in C_0(\Om, E)$, it definitely must be that $||T(t)||=\sup_{x\in\Om}||T_x(t)||_{\sL(E)} = \sup_{x\in\Om}||e^{t\phi(x)}||_{\sL(E)}<\ty$ for all $t\ge0$. Therefore, it does follows that $m_t(x)= e^{t\phi(x)}$ for all $x\in\Om$ and $t\ge0.$ \bigskip

So, it remains only to show that the mapping $\phi : \Om \lo \Bm{\mZ}(E)_\textbf{s};\ x\mapsto\phi(x)$ is continuous. And, on the contrary, assume $\phi$ is not continuous. Let $(x_n)\sse\Om$ be a sequence converging to $x\in\Om$ but $\phi(x_n) \nrightarrow \phi(x)$ in $\Bm{\mZ}(E)_\textbf{s}$. As such, for $\ep>0$ we can find $z_0\in E$ with $||z_0||=1$ such that $  ||\phi(x)z_0 - \phi(x_n)z_0||\ge\ep$ $ \forall\ n\in\N$. Since
\[\phi(x)z_0  = \lim_{t\da 0} \frac{T_x(t)z_0 -z_0}{t} \ \text{ and } \phi(x_n)z_0= \lim_{t\da 0} \frac{T_{x_n}(t)z_0 -z_0}{t}\]
  $ \forall\ n\in\N$, we can find some $t_1\in(0,1]$ such that 
	\beqan
\ep	\le & ||\phi(x)z_0 - \phi(x_n)z_0||   \\
\le   & ||  \frac{T_x(t)}{t}z_0 - \frac{T_{x_n}(t)}{t}z_0 ||
\eeqan
$ \forall\ n\in\N$  with $0<t\le t_1$, which is contradicting the fact that $m_t(\cd)=T_{(\cd)}(t)$ are continuous. Hence,  $\phi : \Om \lo \Bm{\mZ}(E)_\textbf{s}$ is continuous.

\end{proof}

\paragraph{Proof of Theorem \ref{thm1}:} Combining the above Lemma \ref{lem:msg2}  with Proposition \ref{prop:mop1}(i-iii) and Proposition \ref{prop:msg1} proves the assertions of Theorem \ref{thm1}.

\section{Examples}\label{sec:exm}

In this last section, we state some observations and present concrete examples of multiplication semigroups. This can be seen as extension of \cite[Example 3.7, p.25]{engel2006short} in the scalar-valued case.

	\ben 
	\item[(i)] If $K$ is a compact (Hausdorff) space and $E$ is a Banach lattice, then every continuous function $\phi : K \lo  \Bm{\mZ}(E)_\textbf{s}$ is already bounded. Therefore, the induced multiplication operator $\mM_\phi$ on $C(K, E)$  is bounded, and hence every generated multiplication semigroup $(\mT_\phi(t))_{t\ge 0}$ is always uniformly continuous on $C(K, E)$.
	
	\item[(ii)] Let $q: \Om \lo \C$ be an unbounded continuous function which is bounded above as in Remark \ref{rem:msg1}, set $w:=\sup_{x\in\Om} Re q(x)$ and $\phi:=qI_E$. Then, the induced unbounded multiplication operator $(\mM_\phi, D(\mM_\phi))  $ generates a strongly continuous multiplication semigroup $(\mT_\phi(t))_{t\ge 0}$ on $C_0(\Om, E) $ given by
	\[   \mT_\phi(t) s(\cd) = e^{tq(\cd)}s(\cd) \]
	for all $t\ge0$. Moreover, $\sig(\mM_\phi) = \overline{q(\Om)}$ and $||\mT_\phi(t)|| = e^{wt}$	for all $t\ge0$.
	\item[(iii)] Let $\Om:=\N$ and $E:=\C^2$, then each sequence $ (\phi^1,\phi^2):= \text{diag} (\phi^1_n, \phi^2_n)_{n\in\N}$ of 2 by 2 diagonal complex matrices induced a multiplication operator $(\mM_{(\phi^1,\phi^2)}, D(\mM_{(\phi^1,\phi^2)} )   ) $
	\[ (s^1_n, s^2_n)_{n\in\N} \mt (\phi^1_ns^1_n,\phi^2_ns^2_n  )_{n\in\N}    \]
	on $C_0(\N, \C^2)=:c_0(\C^2)$. For $(\phi^1,\phi^2):= \text{diag} (in, -n^2)_{n\in\N}$, the induced multiplication operator $(\mM_{(\phi^1,\phi^2)}, D(\mM_{(\phi^1,\phi^2)} )   ) $  generates a strongly continuous multiplication semigroup $(\mT_{(\phi^1,\phi^2) }(t) )_{t\ge 0}$ on $c_0(\C^2)$ given by
	\[ \mT_{(\phi^1,\phi^2) }(t)(s^1_n, s^2_n)_{n\in\N} =   (e^{int}s^1_n, e^{-n^2t}s^2_n )_{n\in\N}\ \text{ for all } \ t\ge0. \]
	
	\item[(iv)] As a typical instance of (i) above, let $K:= \lb 1, 2, \hdots, m \rb$ be a finite set and $E$ a Banach lattice; so that $C(K, E)= E^m$ is an m-copies of $E$. Given an m-ordinates $\phi:=(\phi_1, \hdots \phi_m  )$ of multiplication operators on $E$, i.e., $\phi_j\in\Bm{\mZ}(E)_\textbf{s}$ for $1\le j\le m$, the induced (bounded) multiplication operator
		\[ (s_1,\hdots, s_m ) \mt (\phi_1s_1,\hdots, \phi_m s_m    )   \]
	on $E^m$ corresponds to the diagonal operator matrix $A_\phi = \text{diag} ( \phi_1, \hdots \phi_m ) $. And, therefore, the generated uniformly continuous multiplication semigroup $(e^{tA_\phi})_{t\ge 0} $ on $E^m$ is given by diagonal operator matrices	
	\[e^{tA_\phi} =  \text{diag}(e^{t\phi_1},\hdots, e^{t\phi_m} ) \ \text{ for all } \ t\ge0.  \]\bigskip
	Moreover, $\sig(A_\phi) = \underset{1\le j\le m}{\bigcup}\sig(\phi_j)$ and $||e^{tA_\phi}|| = \underset{1\le j\le m}{\max}  || e^{t\phi_j}|| \text{ for all } t\ge0.$ 
	\een

\bibliographystyle{siam}

\bibliography{references}

\section*{About the author:}
Tobi David Olabiyi.
Stellenbosch University, Mathematics Division, Faculty of Science, Merriman Avenue, 7600 Stellenbosch, South Africa. 25175645@sun.ac.za\\/davidtobiolabiyi@gmail.com

\end{document}